\newtheorem{theorem}{Theorem}
\newtheorem{definition}[theorem]{Definition}
\newtheorem{example}[theorem]{Example}
\newtheorem{counterexample}[theorem]{Counterexample}
\newtheorem{lemma}[theorem]{Lemma}
\newtheorem{proposition}[theorem]{Proposition}
\newtheorem{remark}[theorem]{Remark}
\newcommand{\R}{{\mathbb R}}
\newcommand{\C}{{\mathbb C}}
\newcommand{\Real}{{\mbox{Re}}}
\newcommand{\spann}{{\mbox{span}}}
\newcommand{\ran}{{\mbox{ran}}}
\newcommand{\Imag}{{\mbox{Im}}}
\newcommand{\idty}{{\mathbbm{1}}}
\numberwithin{equation}{section}
\numberwithin{theorem}{section} 
\numberwithin{footnote}{section}
\begin{document}

\title[Dissipative extensions]{Extensions of
	dissipative operators with closable imaginary part}

\author[C. Fischbacher]{Christoph Fischbacher}
\address{Department of Mathematics\\
	University of California, Irvine\\
	Irvine, CA, 92697, USA}
\email{fischbac@uci.edu}

\date{}

\maketitle

%
\begin{abstract} Given a dissipative operator $A$ on a complex Hilbert space $\mathcal{H}$ such that the quadratic form $f\mapsto \Imag\langle f,Af\rangle$ is closable, we give a necessary and sufficient condition for an extension of $A$ to still be dissipative. As applications, we describe all maximally accretive extensions of strictly positive symmetric operators and all maximally dissipative extensions of a highly singular first-order operator on the interval.
\end{abstract}
\section{Introduction}

The purpose of this paper is to contribute towards the extension theory of dissipative operators $A$ on complex Hilbert spaces $\mathcal{H}$, for which the quadratic form $q_A:\:f\mapsto \Imag\langle f,Af\rangle$ is closable, which we use to define an ``imaginary part" $V_A$ of $A$. 

The extension theory of symmetric, sectorial, accretive and dissipative operators is an extensively studied subject and it would go beyond the scope of this paper to give a complete overview over this subject (for more background information, we recommend the surveys \cite{Arli,AT2009} as well as the monograph \cite{Behrndtetal} and the references therein).

Since finding the dissipative extensions of a dissipative operator is equivalent to finding the contractive extensions of a given contraction (via Cayley transforms, cf.\ \cite[Thm.\ 1.1.1]{Phillips}), this problem  has in principle been completely solved by Crandall \cite[Thm.\ I and Cor.\ I]{CrandallContraction}. Given a contraction defined on a closed subspace $\mathcal{C}\subset\mathcal{H}$, Crandall gave a description of all its contractive extensions on the whole Hilbert space $\mathcal{H}$ (cf.\ also the results in  \cite{AG82}). However, for practical applications, it can be very difficult to explicitly compute the operators involved.
 
An approach which therefore can be taken is to make additional assumptions on the structure of the dissipative operators under consideration. For instance, in \cite{Crandall}, the dissipative extensions of operators $A$ of the form $A=S+iV$, where $S$ is symmetric and $V\geq 0$ is selfadjoint, were studied.
Similarly, in a series of previous papers \cite{FNW, Nonproper, BKVG}, we studied extension problems of dissipative and sectorial operators $A$ of the form $A=S+iV$, where $S$ and $V$ are symmetric.
Hence, requiring that $A$ is a dissipative operator such that $q_A$ is closable is more general, since it is always satisfied by operators of the form $A=S+iV$.

We will proceed as follows:

In Section \ref{sec:2}, we review some basic definitions and previous results concerning dissipative operators and closable quadratic forms. Moreover, given a dissipative operator $A$ such that the quadratic form $q_A: f\mapsto \Imag\langle f,Af\rangle$ is closable, we introduce the ``imaginary part" $V_A$ of $A$ to be the non--negative selfadjoint operator associated to the closure of $q_A$ (Def.\ \ref{def:quadform}). By construction, it follows that $\mathcal{D}(A)\subset\mathcal{D}(V_A^{1/2})$, but we show in Counterexample \ref{counter:AsubV} that it is in general not true that $\mathcal{D}(A)\subset\mathcal{D}(V_A)$.

We then prove our main theorem (Thm.\ \ref{thm:operator}) in Section \ref{sec:3}. Provided that the imaginary part $V_A$ is strictly positive, we give a necessary and sufficient condition for an extension $B$ of $A$ to be dissipative, which we only have to check for the elements of an arbitrary fixed subspace $\mathcal{V}$ that is complementary to $\mathcal{D}(A)$ in $\mathcal{D}(B)$. While our main result involves operators that generally can be difficult to compute, we discuss two applications where everything can be done explicitly and believe that our result can be of use for future applications.

The remainder of this paper is dedicated to discussing these two applications of Theorem \ref{thm:operator}. In Section \ref{sec:4}, we consider the problem of finding the accretive extensions of a given strictly positive symmetric operator $S$ (Thm.\ \ref{thm:accretive}). In particular, we will show that for an extension of $S$ to be accretive, it is necessary that its domain be contained in $\mathcal{D}(S_K^{1/2})$ -- the form domain of the Kre{\u\i}n-von Neumann extension of $S$ (Lemma \ref{lemma:wadjoint}). As an example, we discuss the maximally accretive extensions of the Schr\"odinger operator $-\frac{\text{d}^2}{\text{d}x^2}+V$ on $L^2(\R^+)$, where we assume $V\in L^\infty(\R^+)$ and $V\geq \varepsilon>0$ almost everywhere (Example \ref{ex:schr}).

In Section \ref{sec:5}, we discuss the dissipative extensions of the highly singular first-order operator $i\frac{\text{d}}{\text{d}x}+i\frac{\gamma}{x}
$ on $L^2(0,1)$, where $\gamma>0$. It turns out that the concrete structure of this operator makes it possible to compute all conditions given in Theorem \ref{thm:operator} explicitly and therefore give a full description of all its maximally dissipative extensions (Prop.\ \ref{prop:firstordermaxdiss}).

\section{Preliminaries} \label{sec:2}
To begin with, let us introduce the following conventions: all inner products considered here are linear in the second component, i.e.\ for any $f,g\in\mathcal{H}$ and any $\lambda\in\C$, we have $\langle g,\lambda f\rangle=\lambda\langle g,f\rangle=\langle \overline{\lambda}g,f\rangle$. Moreover, for an operator $A$ on a Hilbert space $\mathcal{H}$, let $\mathcal{D}(A), \ran(A), \ker(A)$ and $\overline{A}$ denote its domain, range, kernel and closure, respectively. Lastly, for any subspace $\mathcal{D}\subset\mathcal{D}(A)$, the restriction of $A$ to $\mathcal{D}$ is denoted by $A\upharpoonright_\mathcal{D}$.

Let us now give a few basic definitions:
\begin{definition}[Dissipative operators] A densely defined operator $A$ on a Hilbert space $\mathcal{H}$ is called {\bf{dissipative}} if and only if 
\begin{equation}
\mbox{\emph{Im}}\langle f,Af\rangle\geq 0
\end{equation}
for any $f\in\mathcal{D}(A)$. Moreover, if $A$ is dissipative and has no non-trivial dissipative operator extension, then it is called {\bf{maximally dissipative}}.
\label{def:dissipative} 
\end{definition}
\begin{definition}[{Closable quadratic form, cf.\ \cite[VI, \S 1, Sec.\ 4]{Kato}}]
Let $q$ be a quadratic form. Then, $q$ is called {\bf closable} if and only if for any sequence $\{f_n\}_n\subset\mathcal{D}(q)$, we have that if
\begin{equation} \|f_n\|\overset{n\rightarrow\infty}{\longrightarrow}0\quad\text{and}\quad q(f_n-f_m)\overset{n,m\rightarrow\infty}{\longrightarrow}0\:,
\end{equation}
then this implies that 
\begin{equation}
q(f_n)\overset{n\rightarrow\infty}{\longrightarrow}0\:.
\end{equation}
 \label{def:closable}
If $q$ is closable, its {\bf closure} $q'$ is given by \cite[VI, Thm.\ 1.17]{Kato}:
\begin{align}
q': \quad&\mathcal{D}(q')=\{ f\in\mathcal{H}: \exists \{f_n\}_{n}\subset\mathcal{D}(q)\:\: s.t.\:\: \|f_n-f\|\overset{n\rightarrow\infty}{\longrightarrow}0\:\:\text{and}\:\: q(f_n-f_m)\overset{n,m\rightarrow\infty}{\longrightarrow} 0\}\\
&\:q'(f):=\lim_{n\rightarrow\infty}q(f_n)\notag\:.
\end{align}
\end{definition}
Let us also recall the following useful Lemma:
\begin{lemma}[Mentioned in \cite{Crandall}, see also \cite{thesis} for a proof.] \label{prop:dissdomain} 
Let $A$ be a closed and dissipative operator on a separable Hilbert space $\mathcal{H}$ such that $\dim\text{\emph{ker}}(A^*-i)<\infty$. Moreover, let $\widehat{A}$ be a dissipative extension of $A$. Then, $\widehat{A}$ is maximally dissipative if and only if 
\begin{equation}
\dim \mathcal{D}(\widehat{A})/{\mathcal{D}(A)}=\dim\text{\emph{ker}}(A^*-i)\:.
\end{equation}
\end{lemma}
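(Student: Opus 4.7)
The plan is to convert the statement about the quotient of domains into one about the quotient of ranges, via the map $A+i$. First I note the elementary identity
\begin{equation*}
\|(A+i)f\|^2 = \|Af\|^2 + 2\Imag\langle f,Af\rangle + \|f\|^2 \geq \|f\|^2,
\end{equation*}
valid for any $f$ in the domain of any dissipative operator. Consequently both $A+i$ and $\widehat A+i$ are bounded below, and in particular injective. A standard graph-norm argument, using that $A$ is closed, then shows $\ran(A+i)$ to be closed in $\mathcal{H}$, so combining $(A+i)^*=A^*-i$ with the identity $\ran(T)^\perp=\ker(T^*)$ yields the orthogonal decomposition
\begin{equation*}
\mathcal{H} = \ran(A+i) \oplus \ker(A^*-i),
\end{equation*}
whence $\dim\mathcal{H}/\ran(A+i)=\dim\ker(A^*-i)=:n<\infty$.

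Next I will use the injectivity of $\widehat A+i$ to promote it to a bijection of $\mathcal{D}(\widehat A)$ onto $\ran(\widehat A+i)$ whose restriction to $\mathcal{D}(A)$ is the bijection $\mathcal{D}(A)\to\ran(A+i)$. Passing to quotients then produces a canonical linear isomorphism
\begin{equation*}
\mathcal{D}(\widehat A)/\mathcal{D}(A) \;\cong\; \ran(\widehat A+i)/\ran(A+i).
\end{equation*}
This will be combined with the following standard consequence of the Lumer--Phillips theorem: a dissipative operator $\widehat A$ is maximally dissipative if and only if $\ran(\widehat A+i)=\mathcal{H}$. (One direction is Lumer--Phillips; the other follows because any proper dissipative extension of $\widehat A$ would contradict the injectivity of $\widehat A+i$ established above.)

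With both ingredients in hand, the two directions fall out. If $\widehat A$ is maximally dissipative then $\ran(\widehat A+i)=\mathcal{H}$, and the displayed isomorphism gives $\dim\mathcal{D}(\widehat A)/\mathcal{D}(A)=n$. For the converse, assume this dimension equals $n$; applying the finite-dimensional short exact sequence
\begin{equation*}
0\longrightarrow \ran(\widehat A+i)/\ran(A+i)\longrightarrow \mathcal{H}/\ran(A+i)\longrightarrow \mathcal{H}/\ran(\widehat A+i)\longrightarrow 0
\end{equation*}
to the chain $\ran(A+i)\subset\ran(\widehat A+i)\subset\mathcal{H}$, one sees that the two outer terms both have dimension $n$, so the rightmost quotient is trivial and $\ran(\widehat A+i)=\mathcal{H}$.

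The main obstacle I anticipate is bookkeeping rather than anything conceptual: one has to verify carefully that the quotient isomorphism is well-defined and, more delicately, that the hypothesis $\dim\ker(A^*-i)<\infty$ really is needed in the converse step, since only under finiteness does the exact sequence force the rightmost term to vanish (two subspaces of infinite codimension with the same codimension difference need not coincide).
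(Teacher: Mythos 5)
Your argument is correct, but note that the paper itself does not prove this lemma: it is stated with a citation to \cite{Crandall} and to the author's thesis \cite{thesis}, so there is no in-paper proof to compare against. Your route is the standard defect-counting one: the estimate $\|(A+i)f\|\geq\|f\|$ for dissipative operators, closedness of $\ran(A+i)$ for closed $A$, the decomposition $\mathcal{H}=\ran(A+i)\oplus\ker(A^*-i)$, the induced isomorphism $\mathcal{D}(\widehat A)/\mathcal{D}(A)\cong\ran(\widehat A+i)/\ran(A+i)$, and the finite-dimensional codimension count; all of these steps check out, including your observation that finiteness of $\dim\ker(A^*-i)$ is genuinely needed in the converse direction. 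The one ingredient you import rather than prove is the equivalence ``$\widehat A$ maximally dissipative $\Leftrightarrow$ $\ran(\widehat A+i)=\mathcal{H}$''; you prove the nontrivial-looking ``$\Leftarrow$'' direction correctly via injectivity of $B+i$ for any dissipative extension $B$, while the ``$\Rightarrow$'' direction is exactly the Phillips/Cayley-transform theory (\cite[Thm.\ 1.1.1]{Phillips}) that the paper itself invokes, so citing it is legitimate here. In short: the proposal is a complete and standard proof of the lemma, consistent with the machinery the paper relies on.
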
 
If a dissipative operator $A$ is such that $q_A$ is closable, let us now define the imaginary part associated to $A$.
\begin{definition} Let $A$ be a dissipative operator. We then define the quadratic form $q_A$ as follows:
\begin{align}
q_A:\quad\mathcal{D}(q_A)=\mathcal{D}(A),\quad
f \mapsto \mbox{\emph{Im}}\langle f,Af\rangle\:.
\end{align}
If $q_A$ is closable, let $V_A$ denote the non-negative selfadjoint operator associated to $q_A'$. \label{def:quadform}
\end{definition}
There are of course well-known examples of non-closable quadratic forms, which we use in the following to construct a dissipative operator which has a non-closable imaginary part: 
\begin{counterexample}[Momentum operator on the half-line] \normalfont Let $\mathcal{H}= L^2(\R^+)$ and consider the operator
\begin{align}
A:\quad\mathcal{D}(A)=H^1(\R^+), \quad f\mapsto -if'\:,
\end{align}
where here and in the following $f'$ denotes the weak derivative of $f$.
For any $f\in\mathcal{D}(A)$, we have --- using integration by parts ---
\begin{align}
\Imag\langle f,Af\rangle=\Imag\langle f,-if'\rangle =\frac{|f(0)|^2}{2}\geq 0\:,
\end{align}
which means that $A$ is dissipative. But the form given by 
\begin{equation}
q_A(f):=\Imag \langle f,Af\rangle=\frac{|f(0)|^2}{2}
\end{equation}
with domain $\mathcal{D}(q_A)=H^1(\R^+)$ is easily seen to not be closable. 
\end{counterexample}
\begin{remark} In previous papers \cite{FNW, Nonproper, BKVG}, we studied the dissipative extensions of dissipative operators $A$ that can be written in the form $A=S+iV$, where $S$ and $V\geq 0$ are symmetric and $\mathcal{D}(A)=\mathcal{D}(S)=\mathcal{D}(V)$ are dense. Note that in this special case, the quadratic form $q_A(f)=\langle f,Vf\rangle$ is always closable \cite[Thm.\ VI, 1.27]{Kato}. 
\end{remark}
Even though by construction we have $\mathcal{D}(q_A)=\mathcal{D}(A)\subset\mathcal{D}(V_A^{1/2})=\mathcal{D}(q_A')$, it is not in general true that
$\mathcal{D}(A)\subset\mathcal{D}(V_A)$ as the following counterexample shows:
\begin{counterexample} \normalfont Let $b$ be such that $\Real(b)\geq 0$ and $\Imag(b)\neq 0$ and consider the maximally dissipative operator $A$ on $\mathcal{H}=L^2(\R^+)$ given by
\begin{align}
A:\qquad\mathcal{D}(A)=\{f\in H^2(\R^+): f'(0)=bf(0)\},\quad 
f\mapsto -if''+if\:.
\end{align}
The quadratic form ${q}_A$ induced by the imaginary part of $A$ is given by
\begin{align}
{q}_A(f)=\Imag\langle f,Af\rangle&=\Imag \left(\int_0^\infty \overline{f(x)}\left(-if''(x)\right)\text{d}x\right)+\|f\|^2\\&=\Real(b)\cdot|f(0)|^2+\|f'\|^2+\|f\|^2\geq 0 \notag
\end{align}
and since
\begin{equation} \label{eq:estimate}
\left||f(0)|^2\right|=\left| \int_0^\infty 2\Real (\overline{f(x)}f'(x))\text{d}x\right|\leq 2\int_0^\infty |f(x)||f'(x)|\text{d}x\leq \|f\|^2+\|f'\|^2
\end{equation}
we have
\begin{equation}
\|f\|^2+\|f'\|^2\leq {q}_A(f) \overset{\eqref{eq:estimate}}{\leq}(1+\Real(b))\left(\|f\|^2+\|f'\|^2\right)\:,
\end{equation}
which means that the norm induced by ${q}_A$ is equivalent to the first Sobolev norm, which implies in particular that $q_A$ is closable. Closing $\mathcal{D}(A)$ with respect to this norm just yields the first Sobolev space $H^1(\R^+)$ and the selfadjoint operator $V_A$ associated to the closed form $q'_A$ is given by
\begin{align}
V_A:\qquad\mathcal{D}(V_A)=\{f\in H^2(\R^+): f'(0)=\Real (b)f(0)\}, \quad f\mapsto -f''+f\:.
\end{align}
Hence, since $b\neq \Real(b)$, we have constructed an example where $\mathcal{D}(A)\not\subset\mathcal{D}(V_A)$.
\label{counter:AsubV}
\end{counterexample}
\section{The main theorem} \label{sec:3}

Let $A$ be dissipative and assume that it induces a strictly positive closable imaginary part $q_A$. In the following, we will determine a necessary and sufficient condition for an extension $A\subset B$ to be dissipative.
\begin{theorem} Let $A$ be dissipative and let ${q}_A$ be the quadratic form as defined in Definition \ref{def:quadform}. Assume that ${q}_A$ is closable and that there exists an $\varepsilon>0$ such that
\begin{equation} \label{eq:totalpositive}
{q}_A(f)\geq\varepsilon\|f\|^2
\end{equation}
for all $f\in\mathcal{D}(A)=\mathcal{D}(q_A)$. Let $V
_A$ be the selfadjoint operator associated to the closure of ${q}_A$. Moreover, let $W_A$ be the operator given by
\begin{align} W_A:\qquad \mathcal{D}(W_A)=\mbox{\emph{ran}}(V_A^{1/2}\upharpoonright_{\mathcal{D}(A)}),\quad
g\mapsto AV_A^{-1/2}g\:.
\end{align}
Let $B$ be an extension of $A$ and pick any subspace $\mathcal{V}$ that is complementary to $\mathcal{D}(A)$ in $\mathcal{D}(B)$, i.e.\ $\mathcal{D}(A)\cap\mathcal{V}=\{0\}$ and $\mathcal{D}(B)=\mathcal{D}(A)\dot{+}\mathcal{V}$. Then, $B$ is dissipative if and only if $\mathcal{V}\subset\mathcal{D}(W_A^*)$ and the inequality
\begin{equation} \label{eq:conditiondiss}
\mbox{\emph{Im}}\langle v,Bv\rangle\geq\frac{1}{4}\|(V_A^{-1/2}B-W_A^*)v\|^2
\end{equation} \label{thm:operator}
is satisfied for every $v\in\mathcal{V}$.
\end{theorem}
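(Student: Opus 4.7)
The plan is to compute $\Imag\langle h, Bh\rangle$ for a generic $h = f + v \in \mathcal{D}(B)$ with $f \in \mathcal{D}(A)$ and $v \in \mathcal{V}$. Expanding,
\begin{equation*}
\Imag\langle h, Bh\rangle = q_A(f) + \Imag\langle v, Af\rangle + \Imag\langle f, Bv\rangle + \Imag\langle v, Bv\rangle\:.
\end{equation*}
Since $V_A \geq \varepsilon I$, the operator $V_A^{-1/2}$ is bounded and self-adjoint. Using $q_A(f) = \|V_A^{1/2}f\|^2$, the defining identity $Af = W_A(V_A^{1/2} f)$, and $\langle f, Bv\rangle = \langle V_A^{1/2}f, V_A^{-1/2} Bv\rangle$, one sees that (when $v \in \mathcal{D}(W_A^*)$) the two cross terms combine into $\Imag\langle V_A^{1/2} f, (V_A^{-1/2} B - W_A^*) v\rangle$.

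For the sufficiency direction, suppose $\mathcal{V} \subset \mathcal{D}(W_A^*)$ and that \eqref{eq:conditiondiss} holds. Setting $\phi := V_A^{1/2}f$ and $\psi := (V_A^{-1/2} B - W_A^*) v$, the decomposition above reads
\begin{equation*}
\Imag\langle h, Bh\rangle = \|\phi\|^2 + \Imag\langle \phi, \psi\rangle + \Imag\langle v, Bv\rangle \geq \bigl(\|\phi\|-\tfrac{1}{2}\|\psi\|\bigr)^2 + \Imag\langle v, Bv\rangle - \tfrac{1}{4}\|\psi\|^2 \geq 0,
\end{equation*}
by Cauchy--Schwarz, completion of the square, and the hypothesis \eqref{eq:conditiondiss}.

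For necessity, assume $B$ is dissipative and apply $\Imag\langle zf + v, B(zf+v)\rangle \geq 0$ for arbitrary $z = re^{i\theta} \in \C$. The $r$-linear part equals $r\Imag[e^{-i\theta}\langle f, Bv\rangle + e^{i\theta}\langle v, Af\rangle]$, whose minimum over $\theta$ is $-r|\langle Bv, f\rangle - \langle v, Af\rangle|$; the discriminant of the resulting quadratic in $r$ must then be non-positive, yielding
\begin{equation*}
|\langle Bv, f\rangle - \langle v, Af\rangle|^2 \leq 4\,q_A(f)\,\Imag\langle v, Bv\rangle\:.
\end{equation*}
Substituting $g := V_A^{1/2} f$ and using $Af = W_A g$ rewrites the left-hand side as $|\langle V_A^{-1/2}Bv, g\rangle - \langle v, W_A g\rangle|^2$. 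As $g$ ranges over $\mathcal{D}(W_A) = V_A^{1/2}\mathcal{D}(A)$, this shows that the linear functional $g \mapsto \langle v, W_A g\rangle$ is bounded by a constant multiple of $\|g\|$ on its domain, forcing $v \in \mathcal{D}(W_A^*)$ with $\langle v, W_A g\rangle = \langle W_A^* v, g\rangle$. The bound $|\langle (V_A^{-1/2}B - W_A^*)v, g\rangle| \leq 2\sqrt{\Imag\langle v, Bv\rangle}\,\|g\|$ extends by continuity to all $g \in \mathcal{H}$, and evaluation at $g = (V_A^{-1/2}B - W_A^*)v$ produces \eqref{eq:conditiondiss}.

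The crucial density fact underlying the necessity argument is that $V_A^{1/2}\mathcal{D}(A)$ is dense in $\mathcal{H}$: since $\mathcal{D}(A)$ is a form core for $V_A$ by construction, $V_A^{1/2}\mathcal{D}(A)$ is dense in $\ran(V_A^{1/2})$, which equals $\mathcal{H}$ because $V_A^{1/2}$ has bounded inverse. The delicate point is the phase optimization in the complex scaling argument: only by replacing $f$ with $zf$ for complex (rather than real) $z$ does the full modulus $|\langle Bv, f\rangle - \langle v, Af\rangle|$ (rather than merely its imaginary part) appear on the left-hand side, and this is precisely what produces the sharp factor $\tfrac{1}{4}$ in \eqref{eq:conditiondiss}.
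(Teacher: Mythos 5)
Your proposal is correct. The sufficiency half and the overall architecture (the expansion of $\Imag\langle f+v,B(f+v)\rangle$, the substitution $g=V_A^{1/2}f$, the density of $\ran(V_A^{1/2}\upharpoonright_{\mathcal{D}(A)})$ via the core property and bounded invertibility of $V_A^{1/2}$) coincide with the paper's argument. Where you genuinely diverge is in the necessity direction. The paper argues by contraposition with explicitly constructed sequences: if $v\notin\mathcal{D}(W_A^*)$ it picks normalized $g_n$ with $\Imag\langle v,W_Ag_n\rangle\to-\infty$, and if \eqref{eq:conditiondiss} fails it picks $g_n\to\frac{-i}{2}(V_A^{-1/2}B-W_A^*)v$, in each case driving $\Imag\langle f_n+v,B(f_n+v)\rangle$ below zero. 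You instead run a direct complex-scaling argument: from $\Imag\langle zf+v,B(zf+v)\rangle\geq 0$ you optimize the phase of $z$ to extract the full modulus $|\langle Bv,f\rangle-\langle v,Af\rangle|$ and then apply the discriminant condition for the quadratic in $r=|z|$, obtaining $|\langle V_A^{-1/2}Bv,g\rangle-\langle v,W_Ag\rangle|^2\leq 4\|g\|^2\,\Imag\langle v,Bv\rangle$ in one stroke; this single inequality simultaneously forces $v\in\mathcal{D}(W_A^*)$ (boundedness of $g\mapsto\langle v,W_Ag\rangle$) and, after extending by continuity and testing against $g=(V_A^{-1/2}B-W_A^*)v$, yields \eqref{eq:conditiondiss}. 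The two routes are dual to one another --- your optimal test vector is exactly the paper's choice of limit for $g_n$ --- but yours unifies the two separate contrapositive cases, makes the origin of the sharp constant $\frac14$ transparent, and avoids having to justify the phase adjustment needed to send $\Imag\langle v,W_Ag_n\rangle$ to $-\infty$; the paper's version, on the other hand, exhibits concrete vectors on which dissipativity fails, which is informative in its own right. One small point worth making explicit in your write-up: the discriminant criterion applies because the leading coefficient $q_A(f)\geq\varepsilon\|f\|^2$ is strictly positive for $f\neq 0$ and the vertex of the quadratic lies at $r\geq 0$, and the constant term $\Imag\langle v,Bv\rangle\geq 0$ is obtained from the case $r=0$.
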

\begin{proof} We need to show that $\Imag\langle f+v,B(f+v)\rangle\geq 0$ for any $f\in\mathcal{D}(A)$ and any $v\in\mathcal{V}$. Using that for any $f\in\mathcal{D}(A)$, we get $\Imag\langle f,Af\rangle=\|V_A^{1/2}f\|^2$, we can write
\begin{align}
\Imag\langle f+v,B(f+v)\rangle&=\Imag\langle f,Af\rangle+\Imag\langle v,Bv\rangle+\Imag\langle f,Bv\rangle+\Imag\langle v,Af\rangle\notag\\
&=\|V_A^{1/2}f\|^2+\Imag\langle v,Bv\rangle+\Imag\langle f,Bv\rangle+\Imag\langle v,Af\rangle\:.
\end{align}
Now, since $V_A\geq\varepsilon$, it follows that $V_A^{1/2}$ is boundedly invertible. Hence, for any $f\in\mathcal{D}(A)\subset\mathcal{D}(V_A^{1/2})$, there exists a unique $g\in\mathcal{H}$ such that $f=V_A^{-1/2}g$. Moreover, note that ${\ran(V_A^{1/2}\upharpoonright_{\mathcal{D}(A)})}$ is dense in $\mathcal{H}$. This follows from the fact that $\ran(V_A^{1/2})=\mathcal{H}$ and that for any $V_A^{1/2}f$, where $f\in\mathcal{D}(V_A^{1/2})$, there exists a sequence $\{f_n\}_n\subset\mathcal{D}(A)$ such that $V_A^{1/2}f_n\rightarrow V_A^{1/2}f$ since $\mathcal{D}(A)$ is a core for $V_A^{1/2}$. This means that $W_A$ is a densely defined operator. Now, let us write
\begin{align} \label{eq:formim}
\Imag\langle f+v,B(f+v)\rangle&=\Imag\langle V_A^{-1/2}g+v,B(V_A^{-1/2}g+v)\rangle\notag\\&=\|V_A^{1/2}V_A^{-1/2}g\|^2+\Imag\langle v,Bv\rangle+\Imag\langle V_A^{-1/2}g,Bv\rangle+\Imag\langle v,AV_A^{-1/2}g\rangle\notag\\
&=\|g\|^2+\Imag\langle v,Bv\rangle+\Imag\langle g,V_A^{-1/2}Bv\rangle+\Imag\langle v,W_Ag\rangle\:.
\end{align}
Assume that $v\notin\mathcal{D}(W_A^*)$, which would mean that the map $g\mapsto\langle v,W_Ag\rangle$ is an unbounded linear functional on $\mathcal{D}(W_A)=\ran(V_A^{1/2}\upharpoonright_{\mathcal{D}(A)})$. Hence, there exists a normalized sequence $\{g_n\}_n\subset\mathcal{D}(W_A)$ such that $\Imag\langle v,W_Ag_n\rangle\overset{n\rightarrow\infty}{\longrightarrow}-\infty$. Plugging this into Equation \eqref{eq:formim}, we obtain
\begin{align}
\|g_n\|^2+\Imag\langle v,Bv\rangle&+\Imag\langle g_n,V_A^{-1/2}Bv\rangle+\Imag\langle v,W_Ag_n\rangle\notag\\&\leq 1+\Imag\langle v,Bv\rangle+\|V_A^{-1/2}Bv\|+\Imag\langle v,W_Ag_n\rangle\overset{n\rightarrow\infty}{\longrightarrow}-\infty\:,
\end{align}
which means that $B$ cannot be dissipative in this case. Thus, suppose that for any $v\in\mathcal{V}$, we have $v\in\mathcal{D}(W_A^*)$ from now on. Let us now show that if \eqref{eq:conditiondiss} is satisfied for all such $v$, this implies that $B$ is dissipative. We proceed to estimate \eqref{eq:formim}:
\begin{align}
&\|g\|^2+\Imag\langle v,Bv\rangle+\Imag\langle g,V_A^{-1/2}Bv\rangle+\Imag\langle v,W_Ag\rangle\notag\\
\overset{\eqref{eq:conditiondiss}}{\geq}&\|g\|^2+\frac{1}{4}\|(V_A^{-1/2}B-W_A^*)v\|^2+\Imag\langle W_A^*v,g\rangle-\Imag\langle V_A^{-1/2}Bv,g\rangle\notag\\
\geq&\|g\|^2+\frac{1}{4}\|(V_A^{-1/2}B-W_A^*)v\|^2-\|g\|\|(V_A^{-1/2}B-W_A^*)v\|\notag\\=&\left(\|g\|-\frac{1}{2}\|(V_A^{-1/2}B-W_A^*)v\|\right)^2\geq 0\:,
\end{align}
which shows that \eqref{eq:conditiondiss} is sufficient for $B$ to be dissipative. Let us finish the proof by showing that it is also necessary. To this end, assume that there exists a $v\in\mathcal{V}$ for which we have 
\begin{equation} \label{eq:cond1}
\Imag\langle v,Bv\rangle-\frac{1}{4}\|(V_A^{-1/2}B-W_A^*)v\|^2\leq -\varepsilon
\end{equation}
for some $\varepsilon>0$. Since $\mathcal{D}(W_A)$ is dense, we may pick a sequence $\{g_n\}_n\subset\mathcal{D}(W_A)$ such that $g_n\overset{n\rightarrow\infty}{\longrightarrow} \frac{-i}{2}(V_A^{-1/2}B-W_A^*)v$. Plugging this sequence into \eqref{eq:formim}, we get
\begin{align}
\notag\|g_n\|^2+\Imag\langle v,Bv\rangle+&\Imag\langle (W_A^*-V_A^{-1/2}B)v,g_n\rangle \\\overset{n\rightarrow\infty}{\longrightarrow}\:&\Imag\langle v,Bv\rangle-\frac{1}{4}\|(V_A^{-1/2}B-{W_A}^*)v\|^2\overset{\eqref{eq:cond1}}{\leq}-\varepsilon\:.
\end{align}
This shows that $B$ cannot be dissipative in this case either, which finishes the proof.
\end{proof}
\section{Accretive extensions of strictly positive symmetric operators} \label{sec:4}
In this section, we apply Theorem \ref{thm:operator} in order to determine the accretive extensions of a strictly positive symmetric operator $S$. We start with the following two definitions:
\begin{definition}[Strictly positive symmetric operator]
A symmetric (and in particular densely defined) operator is called {\bf strictly positive} if there exists a $\varepsilon>0$ such that
\begin{equation}
\langle f,Sf\rangle\geq \varepsilon\|f\|^2
\end{equation}
for any $f\in\mathcal{D}(S)$.
\end{definition}
\begin{definition}[Accretive operator]
An operator $A$ on a Hilbert space $\mathcal{H}$ is called {\bf (maximally) accretive} if and only if $(iA)$ is (maximally) dissipative.
\end{definition}
Clearly, the problem of determining accretive extensions of a strictly positive symmetric operator $S$ is equivalent to determining the dissipative extensions of the dissipative operator $A:=iS$, for which we have a result formulated in Theorem \ref{thm:operator}. We will thus only consider the latter problem from now on, while noting that an operator $B$ being a dissipative extension of $A=(iS)$ implies that $(-iB)$ is an accretive extension of $S$.
Before we proceed and prove the theorem, we need to recall the definition of the Kre{\u\i}n-von Neumann extension of a strictly positive symmetric operator $S$ and some of its properties:
\begin{definition}
Let $S$ be a strictly positive symmetric operator. Moreover, let $S_F$ denote the Friedrichs extension of $S$ (for more details about how $S_F$ is constructed, see for example \cite[Chapter 2.2]{Teschl}). We then define the {\bf{Kre{\u\i}n-von Neumann}} extension of $S$, which we denote by $S_K$, as follows
\begin{align}
S_K:\qquad\mathcal{D}(S_K)=\mathcal{D}(\overline{S})\dot{+}\ker(S^*),\quad
S_K=S^*\upharpoonright_{\mathcal{D}(S_K)}\:.
\end{align}
\end{definition}
\begin{proposition} \label{coro:andonishio}
The domain of $S_K^{1/2}$ is given by 
\begin{equation} \label{eq:kreindomain}
\mathcal{D}(S_K^{1/2})=\mathcal{D}(S_F^{1/2})\dot{+}\ker(S^*)
\end{equation}
and for any $f\in\mathcal{D}(S_F^{1/2})$ and any $v\in\ker(S^*)$, one has
\begin{equation} \label{eq:kreinform}
\|S_K^{1/2}(f+v)\|^2=\|S_F^{1/2}f\|^2\:.
\end{equation}
Moreover, $\mathcal{D}(S_K^{1/2})$ can also be characterized as follows 
\begin{align} \label{eq:kreinformdomain}
\mathcal{D}(S_K^{1/2})=\left\{h\in\mathcal{H}: \sup_{g\in\text{\emph{ran}}(S_F^{1/2}\upharpoonright_{\mathcal{D}(S)}):\|g\|=1}{|\langle h,S_F^{1/2}g\rangle|}<\infty\right\}\:.
\end{align}
\end{proposition}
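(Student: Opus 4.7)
The plan is to establish the formula \eqref{eq:kreinform} first on the operator domain $\mathcal{D}(S_K) = \mathcal{D}(\overline{S}) \dot{+} \ker(S^*)$, then to lift it to the form domain by closure, and finally to use both to derive the supremum characterization \eqref{eq:kreinformdomain}.

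For the first step, I would take $f \in \mathcal{D}(\overline{S})$ and $v \in \ker(S^*)$ and expand
\begin{equation*}
\|S_K^{1/2}(f+v)\|^2 = \langle f+v, S_K(f+v)\rangle = \langle f, \overline{S}f\rangle + \langle v, \overline{S}f\rangle,
\end{equation*}
where the cross term vanishes because $\ker(S^*) = \ran(\overline{S})^\perp$, and since $\overline{S} \subset S_F$ the remaining term equals $\|S_F^{1/2}f\|^2$. This proves \eqref{eq:kreinform} on $\mathcal{D}(S_K)$.

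Next, I would promote this identity to the form domain by closure, using that $\mathcal{D}(S_K)$ is a core for $S_K^{1/2}$. Given $h \in \mathcal{D}(S_K^{1/2})$ with a form-norm approximating sequence $f_n + v_n \in \mathcal{D}(S_K)$, applying the first-step identity to differences shows that $\{f_n\}$ is Cauchy in the form norm of $S_F$, so $f_n \to f \in \mathcal{D}(S_F^{1/2})$ and then $v_n \to h-f =: v \in \ker(S^*)$ by closedness of $\ker(S^*)$. The reverse inclusion uses that $\mathcal{D}(S) \subset \mathcal{D}(\overline{S})$ is a core for $S_F^{1/2}$ to approximate any $f \in \mathcal{D}(S_F^{1/2})$ by $f_n \in \mathcal{D}(S)$ in the form norm of $S_F$; then $f_n + v$ is Cauchy in the form norm of $S_K$ by the first-step identity, giving $f+v \in \mathcal{D}(S_K^{1/2})$. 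Passing to the limit extends \eqref{eq:kreinform} to the whole form domain, and directness of the sum then follows from strict positivity of $S_F$: if $f+v = 0$, the extended formula forces $\|S_F^{1/2}f\|^2 = 0$, hence $f = 0$ and thus $v = 0$.

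For the supremum characterization, I would exploit the identity $S_F^{1/2}g = Sf$ valid for $g = S_F^{1/2}f$ with $f \in \mathcal{D}(S)$. If $h \in \mathcal{D}(S_K^{1/2})$, then $\langle h, S_F^{1/2}g\rangle = \langle h, S_K f\rangle = \langle S_K^{1/2}h, S_K^{1/2}f\rangle$, and Cauchy--Schwarz combined with \eqref{eq:kreinform} bounds the supremum by $\|S_K^{1/2}h\|$. Conversely, if the supremum equals some finite $M$, the bound $|\langle h, Sf\rangle| \leq M\|S_F^{1/2}f\|$ extends from $\mathcal{D}(S)$ to $\mathcal{D}(\overline{S})$ by taking an approximating sequence and using that $\overline{S} \subset S_F$ with $S_F^{1/2}$ boundedly invertible; combining with \eqref{eq:kreinform} yields $|\langle h, S_K\phi\rangle| \leq M\|S_K^{1/2}\phi\|$ for all $\phi \in \mathcal{D}(S_K)$, which by the standard form-domain criterion for non-negative selfadjoint operators places $h$ in $\mathcal{D}(S_K^{1/2})$. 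I expect the main obstacle to be precisely this last extension step: an approximating sequence $f_n \in \mathcal{D}(S)$ for $f \in \mathcal{D}(\overline{S})$ must converge simultaneously in the graph norm of $\overline{S}$ and in the form norm of $S_F$, and organizing this correctly is what makes the implication from the supremum bound back to form-domain membership nontrivial.
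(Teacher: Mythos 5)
Your argument is correct, but it takes a genuinely different route from the paper: the paper does not prove this proposition internally at all, it simply cites \cite{Alonso-Simon} for \eqref{eq:kreindomain}--\eqref{eq:kreinform} and \cite[Cor.\ 2.4]{Nonproper} (itself a corollary of Ando--Nishio \cite{Ando-Nishio}) for \eqref{eq:kreinformdomain}. What you supply is a self-contained derivation from the definition $\mathcal{D}(S_K)=\mathcal{D}(\overline{S})\dot{+}\ker(S^*)$, $S_K=S^*\upharpoonright_{\mathcal{D}(S_K)}$, and it holds up: the identity $\|S_K^{1/2}(f+v)\|^2=\langle f+v,\overline{S}f\rangle=\|S_F^{1/2}f\|^2$ on the operator domain (cross term killed by $\ker(S^*)=\ran(\overline{S})^\perp$), the two closure arguments (where strict positivity of $S_F$ is exactly what upgrades $\|S_F^{1/2}\cdot\|$-Cauchyness to form-norm Cauchyness and also gives directness of the sum), and the reduction of \eqref{eq:kreinformdomain} to the criterion $|\langle h,S_K\phi\rangle|\le M\|S_K^{1/2}\phi\|$ for all $\phi\in\mathcal{D}(S_K)$ are all sound. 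One remark: the step you single out as the main obstacle --- producing $f_n\in\mathcal{D}(S)$ converging to $f\in\mathcal{D}(\overline{S})$ simultaneously in the graph norm of $\overline{S}$ and in the form norm of $S_F$ --- is automatic, since graph-norm convergence already yields
\begin{equation*}
\|S_F^{1/2}(f_n-f)\|^2=\langle f_n-f,\overline{S}(f_n-f)\rangle\le\|f_n-f\|\,\|\overline{S}(f_n-f)\|\longrightarrow 0
\end{equation*}
by Cauchy--Schwarz, so no additional care is needed there. The trade-off between the two routes is the usual one: the paper's citation keeps the exposition short and situates the result in the Birman--Kre\u\i n--Vishik and Ando--Nishio framework, while your proof makes Section \ref{sec:4} self-contained at the cost of roughly a page of standard form-closure arguments.
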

\begin{proof} Formulas \eqref{eq:kreindomain} and \eqref{eq:kreinform} are a well-known result and are for example discussed in \cite{Alonso-Simon}. The characterization \eqref{eq:kreinformdomain} of $\mathcal{D}(S_K^{1/2})$ is shown in \cite[Corollary 2.4]{Nonproper} and is a Corollary of a result by Ando and Nishio {\cite[Thm.\ 1]{Ando-Nishio}}.
\end{proof}
\begin{lemma} \label{lemma:wadjoint} Let $A=iS$, where $S$ is a strictly positive symmetric operator and let $S_F$ denote the strictly positive and selfadjoint Friedrichs extension of $S$. Then, the operator $W_A^*$ is given by
\begin{align} \label{eq:wadjoint}
W_A^*:\qquad\mathcal{D}(W_A^*)=\mathcal{D}(S_K^{1/2})=\mathcal{D}(S_F^{1/2})\dot{+}\ker(S^*),\quad
f\mapsto -iS_F^{1/2}\mathcal{P}f\:, 
\end{align}
where $\mathcal{P}$ is the (unbounded) projection onto $\mathcal{D}(S_F^{1/2})$ along $\ker(S^*)$ according to the decomposition $\mathcal{D}(S_K^{1/2})=\mathcal{D}(S_F^{1/2})\dot{+}\ker(S^*)$.
\end{lemma}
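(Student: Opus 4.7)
The plan is to first identify explicitly what the abstract objects $V_A$ and $W_A$ are for $A = iS$, and then to compute the adjoint $W_A^*$ in two inclusions using the Ando--Nishio type characterization of $\mathcal{D}(S_K^{1/2})$ provided in Proposition \ref{coro:andonishio}.

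First I would show that for $A=iS$, the form $q_A$ reduces to the form of $S$. Indeed, for $f\in\mathcal{D}(S)$, the convention that $\langle f,iSf\rangle=i\langle f,Sf\rangle$ together with the fact that $\langle f,Sf\rangle\in\R$ (since $S$ is symmetric) gives $q_A(f)=\mbox{Im}\langle f,iSf\rangle=\langle f,Sf\rangle$. This is the form whose closure defines the Friedrichs extension $S_F$, so $V_A=S_F$. Because $S$ (hence $S_F$) is strictly positive, $V_A^{1/2}=S_F^{1/2}$ has a bounded inverse on $\mathcal{H}$, and $W_A$ acts as $W_Ag=iS\,S_F^{-1/2}g$ on $\mathcal{D}(W_A)=\mbox{ran}(S_F^{1/2}\upharpoonright_{\mathcal{D}(S)})$.

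Next I would verify the inclusion $\mathcal{D}(S_F^{1/2})\dot{+}\ker(S^*)\subset\mathcal{D}(W_A^*)$ together with the claimed formula by a direct adjoint computation. For $h=f+v$ with $f\in\mathcal{D}(S_F^{1/2})$ and $v\in\ker(S^*)$, and any $g=S_F^{1/2}f'\in\mathcal{D}(W_A)$ with $f'\in\mathcal{D}(S)$, I split
\begin{equation}
\langle h,W_Ag\rangle=i\langle f,Sf'\rangle+i\langle v,Sf'\rangle.
\end{equation}
Because $v\in\ker(S^*)\subset\mathcal{D}(S^*)$, the second term vanishes, and since $f\in\mathcal{D}(S_F^{1/2})$ and $S\subset S_F$, the first term equals $i\langle S_F^{1/2}f,S_F^{1/2}f'\rangle=i\langle S_F^{1/2}f,g\rangle$. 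Hence $g\mapsto\langle h,W_Ag\rangle$ is bounded, so $h\in\mathcal{D}(W_A^*)$, and reading off the adjoint gives $W_A^*h=-iS_F^{1/2}f=-iS_F^{1/2}\mathcal{P}h$, which is exactly \eqref{eq:wadjoint}. This also shows that the projection $\mathcal{P}$ is well defined as an operator on $\mathcal{D}(S_K^{1/2})$ because $\mathcal{D}(S_F^{1/2})\cap\ker(S^*)=\{0\}$, which follows from the strict positivity of $S_F$ (any $v\in\ker(S^*)\cap\mathcal{D}(S_F^{1/2})$ would satisfy $\|S_F^{1/2}v\|^2=\langle v,S^*v\rangle=0$, forcing $v=0$ since $S_F\geq\varepsilon$).

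Finally, for the reverse inclusion $\mathcal{D}(W_A^*)\subset\mathcal{D}(S_K^{1/2})$, I would use the Ando--Nishio characterization \eqref{eq:kreinformdomain}. For any $g=S_F^{1/2}f'\in\mathcal{D}(W_A)$ with $f'\in\mathcal{D}(S)$, one has $S_F^{1/2}g=S_Ff'=Sf'$, so $|\langle h,W_Ag\rangle|=|\langle h,Sf'\rangle|=|\langle h,S_F^{1/2}g\rangle|$. Consequently, the boundedness of $g\mapsto\langle h,W_Ag\rangle$ on the dense subspace $\mbox{ran}(S_F^{1/2}\upharpoonright_{\mathcal{D}(S)})$ is \emph{literally identical} to the supremum condition in \eqref{eq:kreinformdomain}, giving $h\in\mathcal{D}(S_K^{1/2})$. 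The main obstacle, and also the point where I expect the argument to be cleanest, is recognizing this exact match between the adjoint defect of $W_A$ and the Ando--Nishio form-domain characterization; once that is seen, both inclusions are short, and no further analytic work is required beyond the strict positivity of $S_F$.
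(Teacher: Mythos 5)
Your proposal is correct and follows essentially the same route as the paper: identify $V_A=S_F$ and $W_A=iSS_F^{-1/2}$, verify the forward inclusion by splitting $h$ along $\mathcal{D}(S_F^{1/2})\dot{+}\ker(S^*)$ and killing the $\ker(S^*)$ term, and obtain the reverse inclusion by observing that boundedness of $g\mapsto\langle h,W_Ag\rangle$ coincides with the Ando--Nishio characterization \eqref{eq:kreinformdomain} of $\mathcal{D}(S_K^{1/2})$. The only cosmetic differences are that you phrase the reverse inclusion directly rather than by contraposition and add a (correct but unnecessary) aside on the directness of the sum, which the paper already imports from Proposition \ref{coro:andonishio}.
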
 
\begin{proof} Firstly, observe that for $A=iS$, the quadratic form $q_A$ is given by
\begin{align}
q_A:\mathcal{D}(q_A)=\mathcal{D}(A),\quad
f\mapsto\Imag\langle f,iAf\rangle=\langle f,Sf\rangle\:,
\end{align}
which is closable by \cite[Thm.\ VI, 1.27]{Kato} from which we also get that $V_A=S_F$. Thus, the operator $W_A$ is given by
\begin{align}
W_A:\qquad\mathcal{D}(W_A)=\ran(S^{1/2}_F\upharpoonright_{\mathcal{D}(S)}),\quad
f\mapsto iSS_F^{-1/2}f\:.
\end{align}
Let us now show \eqref{eq:wadjoint}. We start by assuming that $v\in\mathcal{D}(S_K^{1/2})=\mathcal{D}(S_F^{1/2})\dot{+}\ker(S^*)$. For any $f\in\mathcal{D}(W_A)=\ran(S_F^{1/2}\upharpoonright_{\mathcal{D}(S)})$, we then get that
\begin{align}
\langle v,W_Af\rangle&=\langle v,iSS_F^{-1/2}f\rangle=\langle \mathcal{P}v,iSS_F^{-1/2}f\rangle+\underbrace{\langle (\idty-\mathcal{P})v,iSS_F^{-1/2}f\rangle}_{=0}\notag\\&=\langle \mathcal{P}v,iS_F ^{1/2}S_F^{1/2}S_F^{-1/2}f\rangle=\langle -iS_F^{1/2}\mathcal{P}v,f\rangle\:,
\end{align}
which shows that $\mathcal{D}(S_K^{1/2})=(\mathcal{D}(S_F^{1/2})\dot{+}\ker(S^*))\subset\mathcal{D}(W_A^*)$ and that $W_A^*v=-iS^{1/2}_F\mathcal{P}v$ for $v\in\mathcal{D}(S_K^{1/2})$.

Let us now show that $\mathcal{D}(W_A^*)\subset \mathcal{D}(S_K^{1/2})$. Assume that this is not true, i.e. that there exists a $v\in\mathcal{D}(W_A^*)$ such that $v\notin\mathcal{D}(S_K^{1/2})$. If $v\in\mathcal{D}(W_A^*)$, this means that there exists a $C<\infty$ such that for any $f\in\mathcal{D}(W_A)=\ran(S^{1/2}_F\upharpoonright_{\mathcal{D}(S)})$ we have
\begin{equation} \label{eq:boundedfunctional}
|\langle v,W_Af\rangle|\leq C\|f\|\:.
\end{equation}
Note that $|\langle v,W_Af\rangle|=|\langle v,SS_F^{-1/2}f\rangle|=|\langle v,S_F^{1/2}f\rangle|$. Since $v\notin\mathcal{D}(S_K^{1/2})$, it now follows from Proposition \ref{coro:andonishio}, that there exists a normalized sequence $\{f_n\}_n\subset\ran(S_F^{1/2}\upharpoonright_{\mathcal{D}(S)})$ such that
$\lim_{n\rightarrow\infty}|\langle v,S_F^{1/2}f_n\rangle|=+\infty$. But this means that \eqref{eq:boundedfunctional} cannot be satisfied in this case, which shows that $\mathcal{D}(W_A^*)\subset\mathcal{D}(S_K^{1/2})$ and thus \eqref{eq:wadjoint}, which finishes the proof.
\end{proof}
The previous lemma shows that any dissipative extension of $A$ has to have domain contained in $\mathcal{D}(S_K^{1/2})$. We will therefore introduce the following characterization of possible dissipative extensions of $A$:
\begin{definition} \label{def:extension}
Let $A=iS$, where $S$ is a strictly positive symmetric operator. Let $\mathcal{V}\subset\mathcal{D}(S_K^{1/2})$ be a subspace such that $\mathcal{V}\cap\mathcal{D}(A)=\{0\}$. Moreover, let $\mathcal{L}$ be an operator from $\mathcal{V}$ to $\mathcal{D}(S_F)$. We then define the operator $A_{\mathcal{V,L}}$ as follows
\begin{align}
A_{\mathcal{V,L}}:\qquad\mathcal{D}(A_{\mathcal{V,L}})=\mathcal{D}(A)\dot{+}\mathcal{V},\quad
f+v\mapsto iSf+iS_F\mathcal{L}v\:,
\end{align}
where $f\in\mathcal{D}(A)$ and $v\in\mathcal{V}$.
\end{definition} 
\begin{lemma} The operators $A_{\mathcal{V,L}}$ are well-defined. Moreover, they characterize all possible extensions of $A$ that have domain contained in $\mathcal{D}(S_K ^{1/2})$. \label{lemma:param}
\end{lemma}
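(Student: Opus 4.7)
The plan is to separate the two claims and treat them in turn. For well-definedness, I would first observe that the decomposition $f+v$ with $f\in\mathcal{D}(A)$ and $v\in\mathcal{V}$ is unique thanks to the assumption $\mathcal{D}(A)\cap\mathcal{V}=\{0\}$: if $f_1+v_1=f_2+v_2$, then $f_1-f_2=v_2-v_1\in\mathcal{D}(A)\cap\mathcal{V}=\{0\}$. The right-hand side $iSf+iS_F\mathcal{L}v$ is then well-defined since $\mathcal{L}v\in\mathcal{D}(S_F)$ by assumption and $f\in\mathcal{D}(A)\subset\mathcal{D}(S)$.

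For the characterization, let $B$ be an arbitrary extension of $A$ with $\mathcal{D}(B)\subset\mathcal{D}(S_K^{1/2})$. I would pick any algebraic complement $\mathcal{V}$ of $\mathcal{D}(A)$ in $\mathcal{D}(B)$, so that $\mathcal{D}(B)=\mathcal{D}(A)\dot{+}\mathcal{V}$ and $\mathcal{V}\subset\mathcal{D}(S_K^{1/2})$. Since $S$ is strictly positive, so is $S_F$, and therefore $S_F$ is boundedly invertible with $\ran(S_F^{-1})=\mathcal{D}(S_F)$. This lets me define
\begin{equation}
\mathcal{L}:\quad \mathcal{V}\to\mathcal{D}(S_F),\qquad v\mapsto -iS_F^{-1}Bv\:,
\end{equation}
which is a well-defined linear map into $\mathcal{D}(S_F)$. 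A direct computation then shows that for $f\in\mathcal{D}(A)$ and $v\in\mathcal{V}$,
\begin{equation}
A_{\mathcal{V},\mathcal{L}}(f+v)=iSf+iS_F\mathcal{L}v=iSf+iS_F(-iS_F^{-1}Bv)=Af+Bv=B(f+v)\:,
\end{equation}
using that $B$ extends $A$. Hence $B=A_{\mathcal{V},\mathcal{L}}$.

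Conversely, any operator of the form $A_{\mathcal{V},\mathcal{L}}$ as in Definition \ref{def:extension} is an extension of $A$ (since $A_{\mathcal{V},\mathcal{L}}\upharpoonright_{\mathcal{D}(A)}=iS=A$) whose domain is contained in $\mathcal{D}(S_K^{1/2})$: indeed, $\mathcal{D}(A)=\mathcal{D}(S)\subset\mathcal{D}(S_F)\subset\mathcal{D}(S_F^{1/2})\subset\mathcal{D}(S_K^{1/2})$ by Proposition \ref{coro:andonishio}, and $\mathcal{V}\subset\mathcal{D}(S_K^{1/2})$ by assumption. I do not expect any real obstacle here; the only point that deserves explicit mention is the use of strict positivity to invert $S_F$ and thereby produce $\mathcal{L}$ from $B$, which is precisely what makes the parametrization $(\mathcal{V},\mathcal{L})$ exhaustive.
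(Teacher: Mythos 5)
Your proof is correct and follows essentially the same route as the paper: well-definedness from the directness of the sum $\mathcal{D}(A)\dot{+}\mathcal{V}$ together with the hypotheses on $\mathcal{L}$, and exhaustiveness by setting $\mathcal{L}v:=-iS_F^{-1}Bv$, using strict positivity of $S$ to invert $S_F$. You simply spell out the verification $B=A_{\mathcal{V},\mathcal{L}}$ and the converse inclusion $\mathcal{D}(A_{\mathcal{V},\mathcal{L}})\subset\mathcal{D}(S_K^{1/2})$, which the paper leaves to the reader.
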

\begin{proof}  Well-definedness follows from the assumptions made on $\mathcal{V}$ and $\mathcal{L}$. Let $B$ with $\mathcal{D}(B)\subset\mathcal{D}(S_K^{1/2})$ be an extension of $A$. This means in particular that there exists a subspace $\mathcal{V}_B$ such that $\mathcal{D}(B)=\mathcal{D}(A)\dot{+}\mathcal{V}_B$. Now, for any $v\in\mathcal{V}_B$, let us define the operator $\mathcal{L}_Bv:=-iS_F^{-1}Bv$. Note that invertibility of $S_F$ follows from the assumption that $S$ is strictly positive. It is then not hard to see that $B=A_{\mathcal{V}_B,\mathcal{L}_B}$.
\end{proof}
We are now prepared to show the main theorem of this section:
\begin{theorem} \label{thm:accretive} Let $A=iS$, where $S$ is a strictly positive symmetric operator. Then, all dissipative extensions of $A$ are described by the operators $A_{\mathcal{V,L}}$, where $\mathcal{V}$ and $\mathcal{L}$ are as in Definition \ref{def:extension} and satisfy the condition
\begin{equation}
\mbox{\emph{Re}}\langle v,S_F\mathcal{L}v\rangle\geq\frac{1}{4}\|S_K^{1/2}(\mathcal{L}v+v)\|^2
\end{equation}
for any $v\in\mathcal{V}$.
\end{theorem}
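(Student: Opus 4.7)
The plan is to apply Theorem \ref{thm:operator} directly to $A=iS$ and use Lemmas \ref{lemma:wadjoint} and \ref{lemma:param} to rewrite the abstract dissipativity criterion in terms of the concrete data $(\mathcal{V},\mathcal{L})$. First I would check that the hypotheses of Theorem \ref{thm:operator} are met: the form $q_A(f)=\langle f,Sf\rangle$ is closable by \cite[Thm.\ VI.1.27]{Kato} because $S$ is symmetric, and $q_A(f)\geq\varepsilon\|f\|^2$ holds by strict positivity; moreover $V_A=S_F$. Any dissipative extension $B$ of $A$ must, by Theorem \ref{thm:operator}, satisfy $\mathcal{V}\subset\mathcal{D}(W_A^*)$; and by Lemma \ref{lemma:wadjoint} this means $\mathcal{V}\subset\mathcal{D}(S_K^{1/2})$, so in particular $\mathcal{D}(B)\subset\mathcal{D}(S_K^{1/2})$. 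Lemma \ref{lemma:param} then tells me that every such $B$ is of the form $B=A_{\mathcal{V},\mathcal{L}}$ for suitable $\mathcal{V}$ and $\mathcal{L}$.

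Next I would specialize the inequality \eqref{eq:conditiondiss} to $B=A_{\mathcal{V},\mathcal{L}}$ and compute both sides. For the left hand side, since $Bv=iS_F\mathcal{L}v$ and the inner product is linear in the second slot,
\begin{equation*}
\Imag\langle v,Bv\rangle=\Imag\bigl(i\langle v,S_F\mathcal{L}v\rangle\bigr)=\Real\langle v,S_F\mathcal{L}v\rangle.
\end{equation*}
For the right hand side, $V_A^{-1/2}Bv=S_F^{-1/2}(iS_F\mathcal{L}v)=iS_F^{1/2}\mathcal{L}v$ (which makes sense since $\mathcal{L}v\in\mathcal{D}(S_F)$), while by Lemma \ref{lemma:wadjoint} one has $W_A^*v=-iS_F^{1/2}\mathcal{P}v$. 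Hence
\begin{equation*}
\bigl\|(V_A^{-1/2}B-W_A^*)v\bigr\|^2=\bigl\|S_F^{1/2}(\mathcal{L}v+\mathcal{P}v)\bigr\|^2.
\end{equation*}

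The last step, which I expect to be the main technical point, is to rewrite this $S_F^{1/2}$ norm as the $S_K^{1/2}$ norm appearing in the theorem statement. Using the decomposition $\mathcal{D}(S_K^{1/2})=\mathcal{D}(S_F^{1/2})\dot{+}\ker(S^*)$, write $v=\mathcal{P}v+(v-\mathcal{P}v)$ with $v-\mathcal{P}v\in\ker(S^*)$, so that
\begin{equation*}
\mathcal{L}v+v=\bigl(\mathcal{L}v+\mathcal{P}v\bigr)+\bigl(v-\mathcal{P}v\bigr),
\end{equation*}
which is exactly the splitting of $\mathcal{L}v+v\in\mathcal{D}(S_K^{1/2})$ into its $\mathcal{D}(S_F^{1/2})$-part and its $\ker(S^*)$-part (recall $\mathcal{L}v\in\mathcal{D}(S_F)\subset\mathcal{D}(S_F^{1/2})$). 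Formula \eqref{eq:kreinform} of Proposition \ref{coro:andonishio} then gives
\begin{equation*}
\bigl\|S_K^{1/2}(\mathcal{L}v+v)\bigr\|^2=\bigl\|S_F^{1/2}(\mathcal{L}v+\mathcal{P}v)\bigr\|^2,
\end{equation*}
so the dissipativity condition \eqref{eq:conditiondiss} becomes exactly $\Real\langle v,S_F\mathcal{L}v\rangle\geq\tfrac{1}{4}\|S_K^{1/2}(\mathcal{L}v+v)\|^2$. Combining this equivalence with Lemmas \ref{lemma:wadjoint} and \ref{lemma:param} completes the proof, since every dissipative extension is captured by some admissible $(\mathcal{V},\mathcal{L})$ and, conversely, the condition is sufficient by Theorem \ref{thm:operator}.
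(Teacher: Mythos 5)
Your proposal is correct and follows essentially the same route as the paper: verify the hypotheses of Theorem \ref{thm:operator} with $V_A=S_F$, use Lemmas \ref{lemma:wadjoint} and \ref{lemma:param} to reduce to the operators $A_{\mathcal{V,L}}$, and then rewrite \eqref{eq:conditiondiss} via $W_A^*v=-iS_F^{1/2}\mathcal{P}v$ and formula \eqref{eq:kreinform}. Your explicit splitting $\mathcal{L}v+v=(\mathcal{L}v+\mathcal{P}v)+(v-\mathcal{P}v)$ just spells out the step the paper leaves implicit in its last displayed equivalence.
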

\begin{proof} From Theorem \ref{thm:operator} and Lemma \ref{lemma:wadjoint}, we know that the domain of any dissipative extension of $A$ has to be contained in $\mathcal{D}(S_K^{1/2})$ and from Lemma \ref{lemma:param}, we have that any such extension can be written in the form $A_{\mathcal{V,L}}$ as defined in Definition \ref{def:extension}. Now, using that for any $v\in\mathcal{V}\subset\mathcal{D}(S_K ^{1/2})$, we have that $A_{\mathcal{V,L}}v=iS_F\mathcal{L}v$ and $W_A^*v=-iS_F^{1/2}\mathcal{P}v$, we may rewrite Condition \eqref{eq:conditiondiss} as follows:
\begin{align}
\Imag\langle v,A_{\mathcal{V,L}}v\rangle&\geq\frac{1}{4}\|(V_A^{-1/2}A_{\mathcal{V,L}}-W_A^*)v\|^2\notag\\
\Leftrightarrow\qquad \Imag\langle v,iS_F\mathcal{L}v\rangle&\geq\frac{1}{4}\|i(S_F^{-1/2}S_F\mathcal{L}+S_F^{1/2}\mathcal{P})v\|^2\notag\\
\Leftrightarrow\qquad \Real\langle v,S_F\mathcal{L}v\rangle&\geq\frac{1}{4}\|S_F^{1/2}(\mathcal{L}v+\mathcal{P}v)\|^2\notag\\
\Leftrightarrow\qquad \Real\langle v,S_F\mathcal{L}v\rangle&\geq\frac{1}{4}\|S_K^{1/2}(\mathcal{L}v+v)\|^2\:,
\end{align}
where we have used \eqref{eq:kreinform} for the last line. This shows the theorem.
\end{proof}
\begin{example} \label{ex:schr} \normalfont
Let us apply this result to the operator $A=iS$ on $L^2(\R^+)$, where $S$ is given by
\begin{align}
S:\qquad\mathcal{D}(S)=H_0^2(\R^+)=\{f\in H^2(\R^+): f(0)=f'(0)=0\},\quad
f\mapsto -f''+Vf
\end{align}
and $V$ is the operator of multiplication by a strictly positive bounded potential, i.e. $0<\varepsilon\leq V(x)$ almost everywhere and $V\in L^\infty(\R^+)$. Now let, $\eta\in H^2(\R^+)$ satisfy 
\begin{align} \label{eq:eta}
 \eta''(x)&=V(x)\eta(x)\notag\\
 \eta(0)&=1\:,
 \end{align}
 which means that $\eta$ spans $\ker S^*$. Then, using that  
 \begin{align}
 S_F^{1/2}:\qquad\mathcal{D}(S_F^{1/2})=H^1_0(\R^+),\quad
 f\mapsto\|f'\|^2+\langle f,Vf\rangle
 \end{align}
 and the decomposition in \eqref{eq:kreindomain}, we get
 \begin{equation}
 \mathcal{D}(S_K^{1/2})=\mathcal{D}(S_F^{1/2})\dot{+}\ker(S^*)=H^1_0(\R^+)\dot{+}\spann\{\eta\}=H^1(\R^+)\:.
 \end{equation}
 A calculation now shows that for any $f\in H^1(\R^+)$, we obtain
 \begin{equation}
 \|S_K^{1/2}f\|^2=\|f'\|^2+\langle f,Vf\rangle+\eta'(0)|f(0)|^2\:.
 \end{equation}
 Now, since $S$ is limit-circle at zero and limit-point at infinity, we get 
 \begin{equation}
 \dim\ker(S^*\pm i)=1
 \end{equation}
 and since $S$ is strictly positive, this implies that 
 \begin{equation}
 \dim\ker(S^*\pm i)=\dim\ker(S^*+\idty)=\dim\ker(A^*-i)=1\:.
 \end{equation}
 Thus, by Lemma \ref{prop:dissdomain}, we have only to look for extensions $A_{\mathcal{V,L}}$ where $\dim\mathcal{V}=1$ and $\mathcal{V}\cap H^2_0(\R^+)=\{0\}$ in order to describe all maximally dissipative extensions of $A$. Let $v$ be such that span $\mathcal{V}=\spann\{v\}$ and define $\mathcal{L}v=:\ell$. Moreover, denote $A_{v,\ell}:=A_{\mathcal{V,L}}$, which means that $A_{v,\ell}$ is given by
 \begin{align}
 A_{v,\ell}:\qquad\mathcal{D}(A_{v,\ell})=H^2_0(\R^+)\dot{+}\spann\{v\},\quad
 f+\lambda v\mapsto Af+i\lambda(-\ell''+V\ell)\:,
 \end{align}
 where $f\in H^2_0(\R^+)$ and $\lambda\in \C$. Applying Theorem \ref{thm:accretive}, we then find that $A_{v,\ell}$ is maximally dissipative (and thus $(-iA_{v,\ell})$ maximally accretive) if and only if 
 \begin{itemize}
 \item $v\in H^1(\R^+)$, but $v\notin H^2_0(\R^+)$.
 \item $v$ and $\ell$ satisfy the following condition:
 \begin{equation}
 \Real\left(\overline{v(0)}\ell'(0)\right)-\frac{\eta'(0)}{4}|v(0)|^2\geq\frac{1}{4}\left(\|v'-\ell'\|^2+\int_0^\infty|v(x)-\ell(x)|^2V(x)\text{d}x\right)\:,
 \end{equation}
 \end{itemize}
 where $\eta$ was determined by the conditions given in \eqref{eq:eta}.
\end{example}
\section{Dissipative extensions of $i\frac{{d}}{{d}x}+i\frac{\gamma}{x}$ on $L^2(0,1)$ } \label{sec:5}

Consider the closed dissipative operator $A$ on the Hilbert space $\mathcal{H}=L^2(0,1)$ given by
\begin{align}
A:\quad\mathcal{D}(A)=H^1_0(0,1)=\{f\in H^1(0,1): f(0)=f(1)=0\},\quad (Af)(x)=if'(x)+i\frac{\gamma}{x}f(x)\:,
\end{align}
where $\gamma>0$. In what follows, we will give a full description of all maximally dissipative extensions of $A$. 

Firstly, note that it can be shown that $\ker(A^*-i)=\spann\{x^\gamma e^x\}$ and thus by Lemma \ref{prop:dissdomain}, all maximally dissipative extensions $B$ of $A$ have to satisfy $\dim(\mathcal{D}(B)/ \mathcal{D}(A))=1$. This means that for each maximally dissipative extension $B$ of $A$, there exists a $v\notin H^1_0(0,1)$ such that $\mathcal{D}(B)=\mathcal{D}(A)\dot{+}\spann\{v\}$.
Next, let the operator $A_0$ be the restriction of $A$ to the set of compactly supported smooth functions on $(0,1)$, i.e. $A_0:=A\upharpoonright_{\mathcal{C}_c^\infty(0,1)}$, where it can be shown that $\overline{A_0}=A$.

Now, since $\mathcal{C}_c^\infty(0,1)$ is a core for $A$, it follows that $\mathcal{C}_c^\infty(0,1)\dot{+}\spann\{v\}$ is a core for $B$. Thus, we apply Theorem \ref{thm:operator} to extensions $B_0$ of $A_0$, whose domain is of the form $\mathcal{D}(B_0)=\mathcal{D}(A_0)\dot{+}\spann\{v\}$, where $v\notin H^1_0(0,1)$. If $B_0$ is dissipative, then we get that $B:=\overline{B_0}$ is a maximally dissipative extension of $A$.
Now, by Theorem \ref{thm:operator}, $B_0$ is dissipative if and only if $v\in\mathcal{D}(W_{A_0}^*)$ and $v$ satisfies 
\begin{equation}
\Imag\langle v,B_0v\rangle\geq \frac{1}{4}\|(V_{A_0}^{-1/2}B_0-W_{A_0}^*)v\|^2\:.
\end{equation}

Let us now determine the imaginary part $V_{A_0}$. Since ${q}_{A_0}$ is given by
\begin{align}
q_{A_0}:\qquad\mathcal{D}({q}_{A_0})=\mathcal{C}_c^\infty(0,1),\quad
f\mapsto\Imag\langle f,A_0f\rangle=\gamma\int_0^1\frac{|f(x)|^2}{x}\text{d}x\:,
\end{align}
we see that $V_{A_0}$ is given by the selfadjoint maximal multiplication operator by the function $\frac{\gamma}{x}$. This implies that $V_{A_0}^{-1/2}$ is the bounded selfadjoint operator of multiplication by $\sqrt{\frac{x}{\gamma}}$.
In order to be able to apply Theorem \ref{thm:operator}, let us firstly determine $\mathcal{D}(W_{A_0}^*)$. Observe that $\mathcal{D}(W_{A_0})=\ran (V_{A_0}^{1/2}\upharpoonright_{\mathcal{D}(A_0)})=\mathcal{C}_c^\infty(0,1)$. This follows from the fact that $\mathcal{D}(A_0)=\mathcal{C}_c^\infty(0,1)$ and $V_{A_0}^{1/2}$ --- the operator of multiplication by $\sqrt{\frac{\gamma}{x}}$ --- is a bijection from $\mathcal{C}_c^\infty(0,1)$ to $\mathcal{C}_c^\infty(0,1)$.
We therefore get
\begin{align}
W_{A_0}:\qquad\mathcal{D}(W_{A_0})&=\mathcal{C}_c^\infty(0,1)\notag\\
(W_{A_0}f)(x)&=\left(i\frac{\text{d}}{\text{d}x}+i\frac{\gamma}{x}\right)\left(\sqrt{\frac{x}{\gamma}}f(x)\right)=\sqrt{\frac{x}{\gamma}}\left(if'(x)+i\frac{2\gamma+1}{2x}f(x)\right)\:.
\end{align}
Now, $v\in\mathcal{D}(W_{A_0}^*)$ means that the map
\begin{equation}
f\mapsto \int_0^1\overline{v(x)}\sqrt{\frac{x}{\gamma}}\left(if'(x)+i\frac{2\gamma+1}{2x}f(x)\right)\text{d}x
\end{equation}
 is a bounded linear functional on $\mathcal{C}_c^\infty(0,1)$. This implies that $v\in\mathcal{D}(W_{A_0}^*)$ if and only if $\left(v(x)\sqrt{\frac{x}{\gamma}}\right)\in\mathcal{D}(K^*)$, where $K$ is the operator given by
\begin{equation}
\mathcal{D}(K)=\mathcal{C}_c^\infty(0,1),\quad (Kf)(x)=if'(x)+i\frac{2\gamma+1}{2x}f(x)\:.
\end{equation}
Now, it can be shown that 
\begin{equation}
K^*:\quad\mathcal{D}(K^*)=H^1_0(0,1)\dot{+}\spann\left\{x^{\gamma+\frac{1}{2}}\right\},\quad
(K^*v)(x)=iv'(x)-i\frac{2\gamma+1}{2x}v(x)
\end{equation}
and thus, we get
\begin{align} \label{eq:w*}
W_{A_0}^*:\quad\mathcal{D}(W_{A_0}^*)&=\left\{v\in L^2(0,1): (\sqrt{x}v(x))\in H^1_0(0,1)\dot{+}\spann\{x^{\gamma+\frac{1}{2}}\}\right\}\notag\\
(W_{A_0}^*v)(x)&=\left(i\frac{\text{d}}{\text{d}x}-i\frac{2\gamma+1}{2x}\right)\left(\sqrt{\frac{x}{\gamma}}v(x)\right)=i\left(\sqrt{\frac{x}{\gamma}}v(x)\right)'-i\frac{2\gamma+1}{2\sqrt{\gamma x}}v(x)\:. 
\end{align}

Pick a $v\in\mathcal{D}(W_{A_0}^*)$ such that $v\notin H^1_0(0,1)$. Then, for any such $v$ and any $\ell\in L^2(0,1)$, we introduce the operators $A_{0,v,\ell}$ given by

\begin{align}
A_{0,v,\ell}:\quad\mathcal{D}(A_{0,v,\ell})=\mathcal{C}_c^\infty(0,1)\dot{+}\spann\{v\},\quad
f+\lambda v\mapsto A_0f+\lambda\ell\:,
\end{align}
where $f\in\mathcal{C}_c^\infty(0,1)$ and $\lambda\in\C$. Note that $A_{0,v,\ell}v=\ell$. Now, by Theorem \ref{thm:operator}, Condition \eqref{eq:conditiondiss}, $A_{0,v,\ell}$ is dissipative if and only if
\begin{align} 
\Imag\langle v,A_{0,v,\ell}v\rangle&\geq\frac{1}{4}\|(V_{A_0}^{-1/2}A_{0,v,\ell}-W_{A_0}^*)v\|^2\notag\\
\Leftrightarrow \qquad \Imag\langle v,\ell\rangle&\geq\frac{1}{4}\int_0^1\left|\sqrt{\frac{x}{\gamma}}\ell(x)-i\left(\sqrt{\frac{x}{\gamma}}v(x)\right)'+i\frac{2\gamma+1}{2\sqrt{\gamma x}}v(x)\right|^2\text{d}x\:.\label{eq:firstordercond}
\end{align}
As argued above, we then get that the closure $A_{v,\ell}:=\overline{A_{0,v,\ell}}$ is a maximally dissipative extension of $A$. To summarize, we have shown the following 
\begin{proposition} \label{prop:firstordermaxdiss}
All maximally dissipative extensions of $A$ are given by the operators $A_{v,\ell}$
\begin{align}
A_{v,\ell}:\quad \mathcal{D}(A_{v,\ell})=H^1_0(0,1)\dot{+}\spann\{v\},\quad f+\lambda v\mapsto Af+\lambda\ell\:, 
\end{align}
where $v\in\mathcal{D}(W_{A_0}^*)\setminus H_0^1(0,1)$ (given in \eqref{eq:w*}) and $\ell\in L^2(0,1)$ satisfy Condition \eqref{eq:firstordercond}. 
\end{proposition}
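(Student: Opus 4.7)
My plan is to assemble the calculations preceding the statement into a formal proof.

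First, I would invoke Lemma \ref{prop:dissdomain}. Since it has been noted that $\ker(A^*-i)=\spann\{x^\gamma e^x\}$ is one-dimensional, any maximally dissipative extension $B\supset A$ must satisfy $\dim(\mathcal{D}(B)/\mathcal{D}(A))=1$; hence there exists $v\notin H^1_0(0,1)$ with $\mathcal{D}(B)=H^1_0(0,1)\dot{+}\spann\{v\}$, and the action of $B$ is completely determined by $A$ together with $\ell:=Bv\in L^2(0,1)$. So every candidate extension has the form $A_{v,\ell}$ in the statement.

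Second, to decide which pairs $(v,\ell)$ actually yield dissipative extensions, I would pass to the dense-subspace model $A_0=A\upharpoonright_{\mathcal{C}_c^\infty(0,1)}$. Since $\mathcal{C}_c^\infty(0,1)$ is a core for $A$, the subspace $\mathcal{C}_c^\infty(0,1)\dot{+}\spann\{v\}$ is a core for $A_{v,\ell}$, so $A_{v,\ell}$ is dissipative if and only if its restriction $A_{0,v,\ell}$ to this core is dissipative. This reduction lets me apply Theorem \ref{thm:operator} to $A_0$, whose imaginary form $q_{A_0}$ has already been computed to equal $\gamma\int_0^1 |f(x)|^2/x\,\text{d}x$, so that $V_{A_0}\geq\gamma>0$ and the strict-positivity hypothesis of Theorem \ref{thm:operator} is met.

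Third, with the one-dimensional choice $\mathcal{V}=\spann\{v\}$, I would substitute the explicit forms of $V_{A_0}^{-1/2}$ (multiplication by $\sqrt{x/\gamma}$) and of $W_{A_0}^*$ from \eqref{eq:w*} into inequality \eqref{eq:conditiondiss} of Theorem \ref{thm:operator}. The computation is direct and reproduces exactly \eqref{eq:firstordercond}, so $A_{0,v,\ell}$ is dissipative if and only if $v\in\mathcal{D}(W_{A_0}^*)$ and \eqref{eq:firstordercond} holds. Taking closures, $A_{v,\ell}=\overline{A_{0,v,\ell}}$ is a closed dissipative extension of $A$; by Lemma \ref{prop:dissdomain} its one-dimensional defect matches $\dim\ker(A^*-i)=1$, so $A_{v,\ell}$ is automatically maximally dissipative.

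The main technical obstacle is the auxiliary claim that $\mathcal{D}(K^*)=H^1_0(0,1)\dot{+}\spann\{x^{\gamma+1/2}\}$ used in \eqref{eq:w*}. To establish this, I would observe that $K$ is a first-order operator on $(0,1)$ with a regular singular endpoint at $0$; the formal adjoint equation $iv'-i\frac{2\gamma+1}{2x}v=0$ has $x^{\gamma+1/2}$ as its unique (up to scalars) $L^2$-solution near $0$, and then by a standard deficiency argument together with integration by parts in the defining identity $\langle Kf,v\rangle=\langle f,K^*v\rangle$, the maximal domain is obtained from the minimal one $H^1_0(0,1)$ by adjoining this single solution. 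Once that lemma is accepted, the remainder of the argument is bookkeeping with the identities already displayed in the discussion preceding the proposition.
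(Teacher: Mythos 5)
Your proposal is correct and follows essentially the same route as the paper: reduce to one-dimensional extensions via Lemma \ref{prop:dissdomain}, pass to the core $\mathcal{C}_c^\infty(0,1)$ where $V_{A_0}$ is the explicit multiplication operator by $\gamma/x$, apply Theorem \ref{thm:operator} with the computed $W_{A_0}^*$ to obtain \eqref{eq:firstordercond}, and take closures. Your added sketch of why $\mathcal{D}(K^*)=H^1_0(0,1)\dot{+}\spann\{x^{\gamma+1/2}\}$ fills in a step the paper only asserts, which is a welcome extra.
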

\vspace{1cm}
\noindent {\bf Acknowledgements:} The main part of the research presented in this paper was done during the author's PhD studies (cf.\ \cite[Chapter 9.3]{thesis}).  It is thus a pleasure to thank Sergey Naboko and Ian Wood for
support and guidance as well as the UK Engineering
and Physical Sciences Research Council (Doctoral Training Grant Ref. EP/K50306X/1) and the School of
Mathematics, Statistics and Actuarial Science at the University of Kent for a PhD studentship.

\end{document}